\numberwithin{equation}{section}
\newtheorem{lemma}[equation]{Lemma}
\newtheorem{thm}[equation]{Theorem}
\newtheorem{conjecture}[equation]{Conjecture}
\newtheorem{prop}[equation]{Proposition}
\theoremstyle{remark}
\newtheorem{remark}[equation]{Remark}
\theoremstyle{remark}
\newcommand{\C}{{\mathbb C}}
\newcommand{\Q}{{\mathbb Q}}
\newcommand{\Z}{{\mathbb Z}}
\newcommand{\Obar}{\overline{O}}
\newcommand{\Qbar}{\overline{\Q}}
\DeclareMathOperator{\N}{\mathbb{N}}
\DeclareMathOperator{\ord}{ord}
\DeclareMathOperator{\Div}{Div}
\DeclareMathOperator{\GCD}{GCD}
\newcommand{\bC}{{\mathbb C}}
\newcommand{\bG}{{\mathbb G}}
\newcommand{\lra}{\longrightarrow}
\newcommand{\cE}{\mathcal{E}}
\newcommand{\hhat}{{\widehat h}}
\begin{document}

\title{A variant of a theorem by Ailon-Rudnick for elliptic curves}

\author{D.~Ghioca}
\address{
Dragos Ghioca\\
Department of Mathematics\\
University of British Columbia\\
Vancouver, BC V6T 1Z2\\
Canada
}
\email{dghioca@math.ubc.ca}

\author{L.-C.~Hsia}
\address{
Liang-Chung Hsia\\
Department of Mathematics\\
National Taiwan Normal University\\
Taipei, Taiwan, ROC
}
\email{hsia@math.ntnu.edu.tw}

\author{T.~J.~Tucker}
\address{
Thomas Tucker\\
Department of Mathematics\\
University of Rochester\\
Rochester, NY 14627\\
USA
}
\email{ttucker@math.rochester.edu}

\begin{abstract}
Given a smooth projective curve $C$ defined over $\Qbar$ and given two elliptic surfaces $\mathcal{E}_1\lra C$ and $\mathcal{E}_2\lra C$ along with  sections $P_i,Q_i$ of $\mathcal{E}_i$ (for $i=1,2$), we prove that if there exist infinitely many $t\in C(\Qbar)$ such that for some integers $m_{1,t},m_{2,t}$, we have that $[m_{i,t}](P_{i})_t=(Q_{i})_t$ on $\mathcal{E}_i$ (for $i=1,2$), then at least one of the following conclusions must hold: either (i) there exists a nontrivial isogeny $\psi: \mathcal{E}_1\lra \mathcal{E}_2$ and also there exist nontrivial endomorphisms $\varphi_i$ of $\mathcal{E}_i$ (for $i=1,2$) such that $\varphi_2(P_2)=\psi(\varphi_1(P_1))$; or (ii) $Q_i$ is a multiple of $P_i$ for some $i=1,2$. A special case of our result answers a conjecture made by Silverman.
\end{abstract}

\thanks{2010 AMS Subject Classification: Primary 11G50; Secondary 11G35, 14G25. The research of the first author was partially supported by an NSERC Discovery grant. The second author was supported by MOST grant 104-2115-M-003-004-MY2. The third author was partially supported by NSF Grant DMS-0101636.}

\maketitle

\section{Introduction}
\label{sec:introduction}

In \cite{Ailon-Rudnick}, Ailon and Rudnick showed that for two
multiplicatively independent nonconstant polynomials
$a, b \in \bC[x]$, there is a nonzero polynomial $h\in \bC[x]$, depending on
$a$ and $b$ such that $\gcd(a^n - 1, b^n - 1) \mid h$ for all positive
integer $n$.  In this paper, we prove a similar result for elliptic
curves; instead of working with the multiplicative group
$\bG_m$, we work with the group law on an elliptic curve defined over
a function field.  The result of Ailon-Rudnick relies crucially on the
Serre-Ihara-Tate theorem (see \cite{lang}), while our result relies
crucially on recent Bogomolov type results for elliptic surfaces due to DeMarco and Mavraki \cite{DM}.

Throughout our article, we work with elliptic surfaces over $\Qbar$; more precisely, given a projective, smooth curve $C$ defined over $\Qbar$, an \emph{elliptic surface} $\cE/C$ is given by a morphism $\pi:\mathcal{E}\lra C$ over $\Qbar$ where the generic fiber of $\pi$ is an elliptic curve $E$ defined over $K = \Qbar(C)$, while for all but finitely many $t\in C(\Qbar)$, the fiber $\cE_t:=\pi^{-1}(\{t\})$ is an elliptic curve defined over $\Qbar$. Recall that a section $\sigma$ of $\pi$ (i.e. a map $\sigma:C\lra \mathcal{E}$ such that $\pi\circ \sigma = {\rm id}|_C$)  gives rise to a $K$-rational point of $E$. Conversly, a point $P\in E(K)$ corresponds to a section of $\pi$; if we need to indicate the dependence  with $P$,  we will  denote it by $\sigma_P$. So, for all but finitely many $t\in C(\Qbar)$, the intersection of the image of $\sigma_P$ in $\mathcal{E}$ with the fiber above $t$ is a point $P_t:=\sigma_P(t)$ on the elliptic curve $E_t$.
For any integer $k$, the multiplication-by-$k$ map $[k]$ on $E$ extends to a morphism on  $\mathcal{E}$; if there is no danger of confusion, we still denote the extension by $[k]$.

We prove the following result.
\begin{thm}
\label{main}
Let $\pi_i:\mathcal{E}_i\lra C$ be elliptic surfaces over a curve $C$ defined over $\Qbar$ with generic fibers $E_i$, and let  $\sigma_{P_i},\sigma_{Q_i}$ be sections of $\pi_i$ (for $i=1,2$) corresponding to points $P_i,Q_i\in E_i(\Qbar(C))$. If there exist infinitely many $t\in C(\Qbar)$ for which there exist some $m_{1,t},m_{2,t}\in\Z$ such that $[m_{i,t}]\sigma_{P_i}(t)= \sigma_{Q_{i}}(t)$ for $i=1,2$, then at least one of the following properties must hold:
\begin{enumerate}
\item[(i)] there exist isogenies $\varphi:E_1\lra E_2$ and  $\psi:E_2\lra E_2$  such that $\varphi(P_1) = \psi(P_2)$.
\item[(ii)] for some $i\in\{1,2\}$, there exists $k_i\in\Z$ such that $[k_i]P_i=Q_i$ on $E_i$.
\end{enumerate}
\end{thm}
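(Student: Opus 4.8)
The plan is to translate the arithmetic hypothesis into a statement about canonical heights and then apply the Bogomolov-type equidistribution results of DeMarco–Mavraki. First I would recall that on each elliptic surface $\mathcal{E}_i/C$ there is a fibral canonical height $\hat h_{\mathcal{E}_i,t}$ on $\mathcal{E}_{i,t}$, and that by Tate's theorem the function $t\mapsto \hat h_{\mathcal{E}_{i,t}}(R_t)$ varies like a Weil height associated to a suitable divisor on $C$ for any section $R$; in particular, $\hat h_{\mathcal{E}_{i,t}}(R_t)=0$ for infinitely many $t$ exactly when $R$ is a torsion section, and more generally $t\mapsto \hat h_{\mathcal{E}_{i,t}}(R_t)$ is (up to a bounded error) the height with respect to a $\Q$-divisor class of degree $\hat h_{E_i}(R)$, the generic canonical height. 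The hypothesis $[m_{i,t}]P_{i,t}=Q_{i,t}$ forces $\hat h_{\mathcal{E}_{i,t}}(Q_{i,t}) = m_{i,t}^2\,\hat h_{\mathcal{E}_{i,t}}(P_{i,t})$ for each of our infinitely many $t$, so the two height functions $h_{P_i}(t):=\hat h_{\mathcal{E}_{i,t}}(P_{i,t})$ and $h_{Q_i}(t):=\hat h_{\mathcal{E}_{i,t}}(Q_{i,t})$ satisfy a multiplicative/square relation along an infinite set.

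Next I would split into cases according to whether the generic canonical heights $\hat h_{E_i}(P_i)$ vanish. If $\hat h_{E_i}(P_i)=0$ for some $i$, then $P_i$ is either torsion or — by a theorem on elliptic surfaces — the surface $\mathcal{E}_i$ is isotrivial with $P_i$ coming from a constant point after base change; in the torsion case one checks $Q_{i,t}$ is torsion at infinitely many $t$, hence $\hat h_{E_i}(Q_i)=0$, and a further analysis (using that $P_{i,t}, Q_{i,t}$ lie in the same cyclic group for infinitely many $t$) yields conclusion (ii), while the isotrivial case is handled by the classical Ailon–Rudnick/Masser–Zannier circle of ideas for constant curves. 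So assume $\hat h_{E_i}(P_i)\neq 0$ for $i=1,2$. Then $h_{P_i}$ and $h_{Q_i}$ are, up to $O(1)$, Weil heights attached to ample $\Q$-divisors on $C$, and the relation $h_{Q_i}(t)=m_{i,t}^2 h_{P_i}(t)$ along an infinite set of bounded-height points forces (after passing to a subsequence making $m_{1,t}/m_{2,t}$ behave uniformly, and using Northcott) the heights $h_{P_1}+h_{P_2}$ and $h_{Q_1}+h_{Q_2}$, or suitable combinations thereof, to be proportional — this is exactly the input needed to invoke the DeMarco–Mavraki unlikely-intersection / Bogomolov statement, which says that if two sections of (possibly different) elliptic surfaces are "simultaneously small" at infinitely many fibers, then the surfaces are isogenous and the sections correspond under the isogeny up to endomorphisms. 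Applying this to the pair $(P_1,P_2)$ (the key point being that $[m_{i,t}]P_{i,t}=Q_{i,t}$ makes $P_1$ and $P_2$ simultaneously "special" at our infinitely many $t$) produces isogenies $\varphi:E_1\to E_2$ and $\psi:E_2\to E_2$ with $\varphi(P_1)=\psi(P_2)$, which is conclusion (i).

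The main obstacle I expect is the bookkeeping needed to extract the right "simultaneously small points" statement from the hypothesis so that the DeMarco–Mavraki theorem applies cleanly: the integers $m_{i,t}$ are unbounded and $t$-dependent, so one cannot directly say $P_{i,t}$ has small height; rather one must work with the normalized points $Q_{i,t}$ versus $P_{i,t}$, observe that the relation pins down the ratio of the two height functions up to the rational squares $m_{1,t}^2/m_{2,t}^2$, and then run a limiting/compactness argument (using that the set of $t$ has bounded height in $C$, plus Northcott and the structure of the height functions as Weil heights of fixed divisor classes) to force an exact proportionality of divisor classes on $C$, which is the genuinely rigid condition the Bogomolov-type theorem needs. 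A secondary subtlety is the degenerate and isotrivial cases, where the DeMarco–Mavraki machinery does not directly apply and one must instead fall back on the Ihara–Serre–Tate input used by Ailon–Rudnick together with Masser–Zannier type results to land in conclusion (ii) (or in the isogeny case of (i) with $\varphi,\psi$ built from the identification of the constant curves).
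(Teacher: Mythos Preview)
Your overall architecture is right---split on whether $\hat h_{E_i}(P_i)>0$, and in the main case feed the sections $P_1,P_2$ into the DeMarco--Mavraki theorem---but there is a genuine gap in how you propose to reach the ``simultaneously small'' hypothesis, and your isotrivial case invokes the wrong tool.

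In the main case you write that ``one cannot directly say $P_{i,t}$ has small height'' and then outline a workaround via ratios $m_{1,t}^2/m_{2,t}^2$ and proportionality of divisor classes. This is where you go astray: one \emph{can} show $\hat h_{\mathcal{E}_{i,t}}(P_{i,t})\to 0$ directly, and the paper does exactly this. The missing observation is that if conclusion~(ii) fails, then for each $i$ only finitely many values of $m_{i,t}$ can occur with the same integer, so $|m_{i,t}|\to\infty$ along the infinite set of $t$'s. Now use the height relation the other way around: from $m_{i,t}^2\,\hat h_{\mathcal{E}_{i,t}}(P_{i,t})=\hat h_{\mathcal{E}_{i,t}}(Q_{i,t})$ and $\hat h_{E_i}(P_i)>0$, Silverman's variation formula forces $h_C(t)$ to stay bounded (otherwise dividing by $h_C(t)$ and letting $t\to\infty$ gives $\infty$ on the left and a finite limit on the right). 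Bounded $h_C(t)$ then bounds $\hat h_{\mathcal{E}_{i,t}}(Q_{i,t})$ uniformly, and dividing by $m_{i,t}^2\to\infty$ gives $\hat h_{\mathcal{E}_{i,t}}(P_{i,t})\to 0$. That is the whole argument; no proportionality of divisor classes or control of $m_{1,t}/m_{2,t}$ is needed, and DeMarco--Mavraki applies immediately.

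For the isotrivial case, the paper does not use Ailon--Rudnick, Ihara--Serre--Tate, or Masser--Zannier. After base-changing so that $\mathcal{E}_1=E_1^0\times C$ with $P_1$ a constant non-torsion point, one shows (by the same height game, now with $\hat h_{\mathcal{E}_{1,t}}(P_{1,t})$ a fixed positive constant) that $h_C(t_i)\to\infty$; this contradicts the bounded-height conclusion for $\mathcal{E}_2$ unless $\hat h_{E_2}(P_2)=0$ as well. So both surfaces are constant and both $P_i$ are constant non-torsion points. The sections $Q_1,Q_2$ then trace out a curve in the constant abelian surface $E_1^0\times E_2^0$ meeting the rank-$2$ group generated by $(P_1^0,0)$ and $(0,P_2^0)$ in infinitely many points, and the conclusion comes from Faltings' theorem (Mordell--Lang), not from torsion-point results.
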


We note here that, in contrast to similar results such as
\cite{Ailon-Rudnick} and \cite{HT}, the underlying multiplication
morphisms need not be isotrivial (that is, they need not be defined over
the field of constants in $k(C)$).

A special case of our result (when both $Q_1$ and $Q_2$ are the zero
elements) answers in the affirmative \cite[Conjecture~7]{Silverman-AR}; this is carried out in a more general setting  in our Proposition~\ref{prop:elliptic gcd} from Section~\ref{sec:elliptic gcd}.

Silverman's question \cite[Conjecture~7]{Silverman-AR} was motivated by work of Ailon-Rudnick
\cite{Ailon-Rudnick}, who showed that the greatest common divisor of
$a^n-1$ and of $b^n-1$ for multiplicatively independent polynomials
$a,b\in\C[T]$ has bounded degree (see also the generalization by
Corvaja-Zannier \cite{Corvaja-Zannier} along with the related results from \cite{CZ1, CZ2, CZ3}). In turn, the result of
Ailon-Rudnick was motivated by the work of Bugeaud-Corvaja-Zannier
\cite{BCZ} who obtained an upper bound for $\gcd(a^k-1,b^k-1)$ (as $k$
varies in $\N$) for given $a,b\in\Qbar$. On the other hand, Silverman
\cite{Silverman function field} showed that the degree of
$\gcd(a^m-1,b^n-1)$ could be quite large when $a,b\in
\overline{\mathbb{F}_p}[T]$; see also the authors' previous paper
\cite{GHT-GCD} where we study the $\gcd(a^m-1,b^n-1)$ when $a$ and $b$
are polynomials over arbitrary fields of positive characteristic,
along with other generalizations on the same theme. Finally, we
mention the work of Denis \cite{Denis-GCD} who studied the same
problem  of the greatest common divisor in the context of Drinfeld
modules. As hinted in \cite{Silverman-AR}, this \emph{greatest common divisor (GCD) problem} may
be studied in much higher generality; for example, if one knew the
result of DeMarco-Mavraki \cite{DM} (see Theorem~\ref{DM theorem}) in the
context of abelian varieties, then our method would extend to a
similar conclusion for arbitrary abelian schemes over a base curve.

Our Theorem~\ref{main} is related also to \cite[Theorem~1.1]{B-C} (see also the extension from \cite{B-C-2}) where it is  shown that given $n$ linearly independent sections $P_i$ on the Legendre elliptic family $y^2=x(x-1)(x-t)$, there are at most finitely many parameters $t$ such that  the points $(P_i)_t$ satisfy two independent linear relations on the corresponding elliptic curve. Therefore, a special case of the result by  Barroero and Capuano is that given sections $P_1,P_2,Q_1,Q_2$ on the Legendre elliptic surface, if these $4$ sections are linearly independent, then there are at most finitely many $t$ such that for some $m_t,n_t\in\mathbb{Z}$ we have that $[m_t](P_1)_t=(Q_1)_t$ and $[n_t](P_2)_t=(Q_2)_t$. However, in our Theorem~\ref{main} we obtain the same conclusion under the weaker hypothesis that $Q_i$ is not a multiple of  $P_i$ for $i=1,2$ and also that $P_1$ and $P_2$ are linearly independent.

We also note that a special case of our Theorem~\ref{main} bears a resemblance to the classical Mordell-Lang problem (see \cite{Faltings}). Indeed, with the notation as in Theorem~\ref{main}, assume there exist infinitely many $t\in C(\Qbar)$ such that for some $m_{t}\in\Z$ we have
\begin{equation}
\label{m and t}
[m_{t}](P_i)_t=(Q_i)_t\text{ for $i=1,2$.}
\end{equation}
Also assume there is no $m\in\Z$ such that $[m]P_i=Q_i$ for $i=1,2$. Then the conclusion of Theorem~\ref{main} yields the existence of isogenies $\varphi:E_1\lra E_2$ and $\psi:E_2\lra E_2$ such that $\varphi(P_1)=\psi(P_2)$. Thus, using that \eqref{m and t} holds for infinitely many $t\in C(\Qbar)$ we get that
\begin{equation}
\label{special Q}
\varphi(Q_1)=\psi(Q_2).
\end{equation}
Therefore, if we let $X\subset \mathcal{A}:=\mathcal{E}_1\times \mathcal{E}_2$ be the $1$-dimensional subscheme corresponding to the section $(Q_1,Q_2)$, and we let $\Gamma\subset \mathcal{A}$ be the cyclic subgroup spanned by $(P_1,P_2)$, then the existence of infinitely many $\gamma\in\Gamma$ such that for some $t\in C(\Qbar)$ we have $\gamma_t\in X$ yields that $X$ is contained in a proper algebraic subgroup of $\mathcal{A}$ (as given by the equation \eqref{special Q}). Such a statement can be viewed as a relative version of the classical Mordell-Lang problem; note that if $\cE_1$ and $\cE_2$ are constant elliptic surfaces with generic fibers $E_i^0$ defined over $\Qbar$,  while $\Gamma\subset (E_1^0\times E_2^0)(\Qbar)$, then this question is a special case of Faltings' theorem \cite{Faltings} (formerly known as the Mordell-Lang conjecture).  It is natural to ask whether the above relative version of the Mordell-Lang problem holds more generally when $\mathcal{A}\lra C$ is an arbitrary semiabelian scheme, $X\subset \mathcal{A}$ is a $1$-dimensional scheme and $\Gamma\subset \mathcal{A}$ is an arbitrary finitely generated group. This more general question is also related  to the bounded height problems studied in \cite{AMZ} in the context of pencils of finitely generated subgroups of $\mathbb{G}_m^n$.

In the next section of this paper, we review some preliminary
material.  Following that, in Section \ref{sec:elliptic}, we prove
Theorem \ref{main}.  The proof in the case of nonconstant sections is
quite similar to the proofs of the main results of \cite{Ailon-Rudnick} and
\cite{HT}, while the case of constant sections requires a different
argument.  In Section \ref{sec:elliptic gcd}, we give a positive
answer to Silverman's conjecture \cite[Conjecture~7]{Silverman-AR}.

\medskip

{\bf Acknowledgments.} We thank Myrto Mavraki and Joe Silverman for several useful conversations.

\section{Preliminaries}
\label{sec:notation}

From now on, we fix an elliptic surface $\pi:\mathcal{E}\lra C$, where $C$ is a projective, smooth curve defined over $\Qbar$. We denote by $E$ the generic fiber of $\mathcal{E}$; this is an elliptic curve defined over $\Qbar(C)$. For all but finitely many $t\in C(\Qbar)$, we have that $\mathcal{E}_t:=\pi^{-1}(\{t\})$ is an elliptic curve defined over $\Qbar$.

\subsection{Isotriviality}
We say that $\mathcal{E}$ is \emph{isotrivial} if the $j$-invariant of the generic fiber is a constant function (on $C$); for isotrivial elliptic surfaces  $\mathcal{E}$, all smooth fibers of $\pi$ are isomorphic (to the generic fiber $E$). If $\mathcal{E}$ is isotrivial, then there exists a finite cover $C'\lra C$ such that $\mathcal{E}':=\mathcal{E}\times_{C}C'$ is a \emph{constant (elliptic) surface} over $C'$, i.e., there exists an elliptic curve $E^0$ defined over $\Qbar$ such that $\mathcal{E}'=E^0\times_{{\rm Spec}(\Qbar)} C'$. Furthermore,  for a constant elliptic surface $E^0\times_{{\rm Spec}(\Qbar)} C'$, we say that $\sigma_P$ is a \emph{constant section} if $P\in E^0(\Qbar).$

\subsection{Canonical height on an elliptic surface}
For each $t\in C(\Qbar)$ such that $\cE_t$ is an elliptic curve, we let $\hhat_{\cE_t}$ be the N\'eron-Tate canonical height for the points in $\cE_t(\Qbar)$ (for more details, see \cite{Silverman-book-1}). There are two important properties of the canonical height which we will use:
\begin{itemize}
\item[(1)] $\hhat_{\cE_t}(P_t)=0$ if and only if $P_t$ is a torsion point of $\mathcal{E}_t$, i.e., there exists a positive integer $k$ such that $[k]P_t=0$.
\item[(2)] for each $k\in\Z$ we have that $\hhat_{\cE_t}([k]P_t)=k^2\cdot \hhat_{\cE_t}(P_t)$.
\end{itemize}

Also, we let $\hhat_{E}$ be the N\'eron-Tate canonical height on the generic fiber $E$ constructed with respect to the Weil height on the function field $\Qbar(C)$ (for more details, see \cite{Silverman-book-2}). Property~(2) above holds also on the generic fiber, i.e., $\hhat_{E}([k]P)=k^2\cdot \hhat_{E}(P)$. On the other hand, property~(1) above holds only if $\mathcal{E}$ is non-isotrivial. Furthermore, if $\mathcal{E}=E\times_C C$ is a constant family (where $E$ is an elliptic curve defined over $\Qbar$), then for any $P\in E(\Qbar(C))$, we have that $\hhat_{E}(P)=0$ if and only if $P\in E(\Qbar)$.

\subsection{Variation of the canonical height}
We let $h_C$ be a given Weil height for the points in $C(\Qbar)$ corresponding to a divisor of degree $1$ on $C$.

Let $P$ be a section of the elliptic surface $\mathcal{E}\lra C$. As before, we identify this section with its intersection (which we also call $P$) with the generic fiber of $\mathcal{E}$; also, for all but finitely many $t\in C(\Qbar)$, we have that the intersection of the image of $\sigma_P$ in $\mathcal{E}$  with the fiber above $t$ is a point $P_t$ on the elliptic curve $\cE_t$.
The following important fact will be used in our proof (see  \cite{CS}):
\begin{equation}
\label{equation variation}
\lim_{h_C(t)\to\infty}\frac{\hhat_{\cE_t}(P_t)}{h_C(t)} = \hhat_{E}(P).
\end{equation}
Furthermore, the following more precise result holds, as proven by Silverman \cite{Silverman-3},
\begin{equation}
\label{equation variation precise}
\hhat_{\cE_t}(P_t)=h_{C,\eta(P)}(t) + O_P(1),
\end{equation}
where $\eta(P)$ is a divisor on $C$ of degree equal to $\hhat_{E}(P)$ and $h_{C,\eta(P)}$ is a given Weil height for the points in $C(\Qbar)$  corresponding to the divisor $\eta(P)$, while the implicit constant from the term $O_P(1)$ is only dependent on the section $P$, but not on $t\in C(\Qbar)$.

\subsection{Points of small height on sections}

We will use the following result of DeMarco-Mavraki \cite[Theorem~1.4]{DM} who extends \cite{DWY} (and in turn, uses the extensive analysis from \cite{Silverman-3} regarding the variation of the canonical height in an elliptic fibration). We also note that the case of isotrivial elliptic curves from Theorem~\ref{DM theorem} was previously proven by Zhang \cite{Zhang}, as part of Zhang's famous proof of the classical Bogomolov conjecture.

\begin{thm}[DeMarco-Mavraki \cite{DM}]
\label{DM theorem}
Let $\mathcal{E}_1,\mathcal{E}_2$ be elliptic fibrations over the same $\Qbar$-curve $C$. Let $P_i$ be a section of $\mathcal{E}_i$ (for $i=1,2$) with the property that there exists an infinite sequence $\{t_n\}\subset C(\Qbar)$ such that
$$\lim_{n\to\infty} \hhat_{\left(\mathcal{E}_i\right)_{t_n}}\left((P_i)_{t_n}\right) = 0\text{ for }i=1,2.$$
Then there exist group homomorphisms $\phi:\mathcal{E}_1\lra \mathcal{E}_2$ and $\psi:\mathcal{E}_2\lra \mathcal{E}_2$, not both trivial, such that $\phi(P_1)=\psi(P_2)$.
\end{thm}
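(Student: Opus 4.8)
The plan is to realize the two fiberwise N\'eron--Tate heights as height functions of semipositive adelic metrized line bundles on $C$, apply arithmetic equidistribution to the common small sequence $\{t_n\}$, and then run a rigidity argument on the resulting canonical measures to extract the isogenies. Write $d_i:=\hhat_{E_i}(P_i)\ge 0$. If $d_i=0$ for some $i$, then (after a finite base change, in the isotrivial case) $P_i$ becomes a torsion or constant section, and the assertion is then checked directly by a separate, more elementary argument; so assume from now on that $d_1,d_2>0$. Fix a number field $k$ over which $C$, the $\mathcal{E}_i$ and the $P_i$ are all defined.

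For each $i$ one constructs an adelic, semipositive metrized line bundle $\overline L_i$ on $C_k$ with underlying bundle $\mathcal O_C(\eta(P_i))$ (of degree $d_i$, cf.\ \eqref{equation variation precise}) whose height function is exactly $h_{\overline L_i}(t)=\hhat_{(\mathcal{E}_i)_t}\big((P_i)_t\big)$: at each place $v$ of $k$ the metric is given by the local canonical height of $P_i$, built by Tate's telescoping limit from the naive heights of the sections $[2^n]P_i$. Semipositivity holds because each such metric is a uniform limit of semipositive metrics, and the same limiting process together with Silverman's uniform estimate \eqref{equation variation precise} yields $\overline L_i^{\,2}=0$. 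Since $h_{\overline L_i}\ge 0$ while the hypothesis provides infinitely many distinct $t_n$ with $h_{\overline L_i}(t_n)\to 0$ and any infinite subset of the curve $C$ is Zariski dense, the essential minimum of $\overline L_i$ equals $0=\overline L_i^{\,2}/(2d_i)$.

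We may therefore invoke the arithmetic equidistribution theorem (Yuan; for curves also Chambert-Loir, Baker--Rumely, Favre--Rivera-Letelier, Thuillier): as $\overline L_i$ is semipositive of positive degree with vanishing self-intersection and $\{t_n\}$ is generic with $h_{\overline L_i}(t_n)\to 0$, the Galois orbits of the $t_n$ equidistribute, at each place $v$ of $k$, with respect to the probability measure $\mu_{i,v}:=\tfrac1{d_i}\,c_1(\overline L_i)_v$ on the Berkovich analytification $C_v^{\mathrm{an}}$. Because the same sequence serves both $i=1$ and $i=2$, uniqueness of weak limits forces
\[
\mu_{1,v}=\mu_{2,v}\qquad\text{for every place }v\text{ of }k .
\]
Hence $\tfrac1{d_1}\overline L_1$ and $\tfrac1{d_2}\overline L_2$ have identical curvature everywhere; comparing local heights (which then differ by a harmonic, hence constant, function on each compact $C_v^{\mathrm{an}}$) and using the product formula at some $t_n$ gives in addition $\tfrac1{d_1}\hhat_{(\mathcal{E}_1)_t}((P_1)_t)=\tfrac1{d_2}\hhat_{(\mathcal{E}_2)_t}((P_2)_t)$ for all $t\in C(\Qbar)$.

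The main obstacle is the final, rigidity step: deducing the algebraic relation from $\mu_{1,v}=\mu_{2,v}$. Work at an archimedean $v$ over a disk in $C(\mathbb{C})$ avoiding the singular fibers, uniformize $(\mathcal{E}_i)_t\cong\mathbb{C}/(\mathbb{Z}+\tau_i(t)\mathbb{Z})$ with $\tau_i$ holomorphic and $(P_i)_t\leftrightarrow z_i(t)$, so that $c_1(\overline L_i)_v$ is the explicit $dd^c$ of a N\'eron/theta-function expression in $\tau_i$ and $z_i$, with prescribed atoms at the singular fibres. Equality of the measures first matches the section-independent part, governed by the variation of $\tau_i$ (essentially the bifurcation measure of the family), which forces $\mathcal{E}_1$ and $\mathcal{E}_2$ to be isogenous over $C$; replacing $P_1$ by its image under such an isogeny reduces to $\mathcal{E}_1=\mathcal{E}_2$, and then matching the remaining part forces $z_1$ and $z_2$ to coincide up to an endomorphism and a translation by a torsion section. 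Clearing that torsion translation by a suitable multiplication map produces isogenies $\phi:\mathcal{E}_1\to\mathcal{E}_2$ and $\psi:\mathcal{E}_2\to\mathcal{E}_2$, not both trivial, with $\phi(P_1)=\psi(P_2)$. The delicate points are exactly this characterization of when two canonical measures agree — the elliptic-surface counterpart of the rigidity theorems of Baker--DeMarco — and the bookkeeping of the atomic parts of the measures at the bad fibres, which must be matched as well.
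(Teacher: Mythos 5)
The paper does not prove this statement at all: it is quoted as Theorem~1.4 of DeMarco--Mavraki \cite{DM}, so your attempt has to be measured against their published argument. Your first half is indeed their strategy: realize $t\mapsto \hhat_{(\mathcal{E}_i)_t}((P_i)_t)$ as the height attached to a semipositive continuous adelic metrized line bundle of degree $d_i=\hhat_{E_i}(P_i)>0$ on $C$, check that its self-intersection vanishes, and apply equidistribution of small points at every place to the common sequence $\{t_n\}$ to get $\mu_{1,v}=\mu_{2,v}$ and then proportionality of the two fiberwise height functions on $C(\Qbar)$. Be aware, though, that your one sentence ``semipositivity holds because each such metric is a uniform limit of semipositive metrics'' is exactly the main technical theorem of \cite{DM} (building on \cite{DWY} and on Silverman's variation-of-height results \cite{Silverman-3}): the delicate point is uniform convergence and continuity of the Tate-limit local heights at the parameters of bad reduction, and without that input the equidistribution theorem cannot be invoked.

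The genuine gap is your endgame. You propose to extract the algebraic relation from $\mu_{1,v}=\mu_{2,v}$ by splitting the archimedean canonical measure into a ``section-independent part'' (claimed to force the surfaces to be isogenous) and a section part, and you explicitly defer the key statement to an unproved ``elliptic-surface counterpart of the rigidity theorems of Baker--DeMarco.'' No such decomposition of $c_1(\overline L_i)_v$ is established, and the rigidity assertion you appeal to is essentially equivalent to the theorem being proved, so the argument is incomplete precisely at its crucial step. The route actually taken in \cite{DM} avoids measure rigidity: from the identity $\tfrac{1}{d_1}\hhat_{(\mathcal{E}_1)_t}((P_1)_t)=\tfrac{1}{d_2}\hhat_{(\mathcal{E}_2)_t}((P_2)_t)$ for all $t\in C(\Qbar)$, the infinitely many parameters at which $(P_1)_t$ is torsion (hence of height zero) are also torsion parameters for $P_2$, and the Masser--Zannier theorem on simultaneous torsion in elliptic families \cite{M-Z} then yields homomorphisms, not both trivial, with $\phi(P_1)=\psi(P_2)$. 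Supplying either that reduction or an honest proof of your measure-rigidity claim is what is missing; by contrast, your disposal of the degenerate case $d_i=0$ (torsion, or constant sections for which the hypothesis cannot hold) is fine.
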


\section{Proof of our main result}
\label{sec:elliptic}

Propositions~\ref{prop:small points} and \ref{prop:small points 2} are key to our proof.
\begin{prop}
\label{prop:small points}
Let $C$ be a projective, smooth curve defined over $\Qbar$, and let $h_C(\cdot)$ be a Weil height for the algebraic points of $C$ corresponding to a divisor of degree $1$.
Let $P$ and $Q$ be sections of an elliptic surface $\pi:\mathcal{E}\lra C$ with generic fiber $E$, and assume there exists no $k\in\Z$ such that $[k]P=Q$. In addition, assume $\hhat_{E}(P)>0$. If there exists an infinite sequence $\{t_i\}\subset C(\Qbar)$ such that for each $i\in\N$ there exists some $m_i\in\Z$ such that $[m_i]P_{t_i}=Q_{t_i}$, then $h_C(t_i)$ is uniformly bounded and  $\lim_{i\to\infty}\hhat_{\cE_{t_i}}(P_{t_i})=0$.
\end{prop}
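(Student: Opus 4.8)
The plan is to exploit the precise form of the variation of canonical height from \eqref{equation variation precise} to turn the arithmetic hypothesis $[m_i]P_{t_i}=Q_{t_i}$ into a height inequality forcing $h_C(t_i)$ to be bounded. First I would apply \eqref{equation variation precise} to both $P$ and $Q$: there are divisors $\eta(P)$ and $\eta(Q)$ on $C$ of degrees $\hhat_E(P)$ and $\hhat_E(Q)$ respectively, such that $\hhat_{\cE_t}(P_t)=h_{C,\eta(P)}(t)+O_P(1)$ and $\hhat_{\cE_t}(Q_t)=h_{C,\eta(Q)}(t)+O_Q(1)$ for all $t\in C(\Qbar)$ outside a finite set. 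The relation $[m_i]P_{t_i}=Q_{t_i}$ together with property~(2) of the canonical height on $\cE_{t_i}$ gives $m_i^2\,\hhat_{\cE_{t_i}}(P_{t_i})=\hhat_{\cE_{t_i}}(Q_{t_i})$. Combining this with the two asymptotic formulas yields
\begin{equation*}
m_i^2\bigl(h_{C,\eta(P)}(t_i)+O_P(1)\bigr)=h_{C,\eta(Q)}(t_i)+O_Q(1).
\end{equation*}

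Next I would use the hypothesis $\hhat_E(P)>0$, i.e. $\deg\eta(P)>0$. After possibly enlarging $\eta(P)$ by a principal/ample correction and absorbing it into the $O(1)$, I may assume $\eta(P)$ is ample, so $h_{C,\eta(P)}(t)\geq c\cdot h_C(t)-O(1)$ for some $c>0$; likewise $h_{C,\eta(Q)}(t)\leq C'\cdot h_C(t)+O(1)$ for some constant $C'$ (these are standard comparisons of Weil heights attached to divisors of fixed degree on a curve). Plugging these into the displayed relation gives, for all large $i$,
\begin{equation*}
m_i^2\bigl(c\cdot h_C(t_i)-O(1)\bigr)\leq C'\cdot h_C(t_i)+O(1).
\end{equation*}
If $h_C(t_i)\to\infty$ along a subsequence, then dividing by $h_C(t_i)$ forces $m_i^2$ to stay bounded, hence $m_i$ takes only finitely many values; passing to a further subsequence we may assume $m_i=m$ is constant. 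But then $[m]P_{t_i}=Q_{t_i}$ for infinitely many $t_i$, and since two sections of $\mathcal{E}\lra C$ agreeing at infinitely many fibers must agree generically, we get $[m]P=Q$ on $E$, contradicting the hypothesis. Therefore $h_C(t_i)$ is uniformly bounded.

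Finally, once $h_C(t_i)$ is bounded, the conclusion $\lim_{i\to\infty}\hhat_{\cE_{t_i}}(P_{t_i})=0$ does \emph{not} follow from boundedness of $h_C(t_i)$ alone (indeed boundedness just gives $\hhat_{\cE_{t_i}}(P_{t_i})=O(1)$), so here I would return to the relation $m_i^2\,\hhat_{\cE_{t_i}}(P_{t_i})=\hhat_{\cE_{t_i}}(Q_{t_i})$. Since $h_C(t_i)$ is bounded, $\hhat_{\cE_{t_i}}(Q_{t_i})=h_{C,\eta(Q)}(t_i)+O(1)$ is bounded, say by some $B$; hence $\hhat_{\cE_{t_i}}(P_{t_i})\leq B/m_i^2$. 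If the $m_i$ were bounded we would again, after passing to a subsequence, get $m_i$ constant and $[m]P=Q$, a contradiction; so $|m_i|\to\infty$ (at least along the relevant subsequence), forcing $\hhat_{\cE_{t_i}}(P_{t_i})\leq B/m_i^2\to 0$. Running this argument on every subsequence shows the full sequence $\hhat_{\cE_{t_i}}(P_{t_i})$ tends to $0$. The main obstacle I anticipate is the careful bookkeeping in the second step: making the comparison between $h_{C,\eta(P)}$, $h_{C,\eta(Q)}$ and $h_C$ rigorous (in particular handling the case $\hhat_E(Q)=0$, where $\eta(Q)$ has degree $0$ and $h_{C,\eta(Q)}$ is bounded, which actually simplifies matters) and ensuring the finite exceptional set of bad fibers is harmless. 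The identity-theorem step ("two sections equal on infinitely many fibers are equal") is standard since $C$ is a curve and the locus of agreement is Zariski closed.
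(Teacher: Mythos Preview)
Your proof is correct and follows essentially the same strategy as the paper's: use $m_i^2\hhat_{\cE_{t_i}}(P_{t_i})=\hhat_{\cE_{t_i}}(Q_{t_i})$ together with the height-variation formulas to bound $h_C(t_i)$, then bound $\hhat_{\cE_{t_i}}(Q_{t_i})$ and divide by $m_i^2$. The only organizational differences are that the paper first proves $|m_i|\to\infty$ directly (each fixed $m$ can satisfy $[m]P_t=Q_t$ for only finitely many $t$, since $[m]P\ne Q$) and then invokes the simpler Call--Silverman limit \eqref{equation variation} rather than \eqref{equation variation precise} for the boundedness of $h_C(t_i)$; you instead use \eqref{equation variation precise} from the start and recover $|m_i|\to\infty$ by the subsequence/pigeonhole contrapositive. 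Both orderings work, and your height comparisons $h_{C,\eta(P)}(t)\ge c\,h_C(t)-O(1)$ and $h_{C,\eta(Q)}(t)\le C'\,h_C(t)+O(1)$ are exactly the ``ample difference'' argument the paper uses (with $mD-\eta(Q)$ ample) in its second step.
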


\begin{proof}
Since $[m_i]P_{t_i}=Q_{t_i}$, we get that
\begin{equation}
\label{height i}
m_i^2\cdot \hhat_{\cE_{t_i}}(P_{t_i})=\hhat_{\cE_{t_i}}(Q_{t_i}).
\end{equation}

Since $[k]P\ne Q$ for any $k\in\Z$ and the sequence $\{t_i\}$ is infinite, then
\begin{equation}
\label{infinite m i}
\lim_{i\to\infty}|m_i|=\infty.
\end{equation}

We claim first that $h_C(t_i)$ is uniformly bounded. Indeed, assuming (at the expense, perhaps, of replacing $\{t_i\}$ by an infinite subsequence) that $\lim_{i\to\infty}h_C(t_i)=\infty$, equation \eqref{equation variation} coupled with equations \eqref{height i} and \eqref{infinite m i} yields a contradiction. To see this, we divide both sides of \eqref{height i} by $h_C(t_i)$ and then take limits.  Because  $\hhat_{E}(P)>0$, equation \eqref{infinite m i} yields that the left hand side equals
\begin{equation}
\label{left hand limit infinite}
\lim_{i\to\infty} m_i^2\cdot \frac{\hhat_{\cE_{t_i}}(P_{t_i})}{h_C(t_i)}=\infty,
\end{equation}
while the right hand side equals
\begin{equation}
\label{right hand side finite}
\lim_{i\to\infty} \frac{\hhat_{\cE_{t_i}}(Q_{t_i})}{h_C(t_i)} = \hhat_{E}(Q)<\infty,
\end{equation}
which is a contradiction. So, indeed, we must have that $h_C(t_i)$ is uniformly bounded.

Next we prove that also $\hhat_{\cE_{t_i}}(Q_{t_i})$ is uniformly bounded. Using \eqref{equation variation precise} (see \cite{Silverman-3}), we know that there exists a divisor $\eta(Q)$ of $C$ of degree equal to $\hhat_{E}(Q)$ such that
\begin{equation}
\label{variation 3}
\hhat_{\cE_t}(Q_t)=h_{C,\eta(Q)}(t) + O(1),
\end{equation}
where $h_{C,\eta(Q)}$ is a Weil height on $C(\Qbar)$ corresponding to the divisor $\eta(Q)$. Since $h_C$ is a Weil height associated to a divisor $D$ on $C$ of degree $1$, then for any positive integer $m>\deg(\eta(Q))$, the divisor $D_1:=mD- \eta(Q)$ has positive degree and therefore, it is ample. Then  \cite[Proposition~B.3.2]{Hindry-Silverman} yields that any Weil height $h_{C,D_1}$ associated to the divisor $D_1$ satisfies $h_{C,D_1}(t)\ge O(1)$ for all $t\in C(\Qbar)$. So,
\begin{equation}
\label{height machine equation}
mh_C(t) + O(1)\ge h_{C,\eta(Q)}(t)\text{ for }t\in C(\Qbar).
\end{equation}
Therefore $h_{C,\eta(Q)}(t_i)$ is uniformly bounded (since $h_C(t_i)$ is uniformly bounded). Then equation \eqref{variation 3} provides the desired claim that
\begin{equation}
\label{height of R t bounded}
\hhat_{\cE_{t_i}}(Q_{t_i})\text{ is bounded as }i\to\infty.
\end{equation}
Combining equations \eqref{height i}, \eqref{infinite m i} and \eqref{height of R t bounded}, we finish the proof of Proposition~\ref{prop:small points}.
\end{proof}

\begin{prop}
\label{prop:small points 2}
Let $P$ and $Q$ be sections of a constant elliptic fibration $\pi:\mathcal{E}\lra C$, and assume there exists no $k\in\Z$ such that $[k]P=Q$. In addition, assume $P$ is a non-torsion, constant section. If there exists an infinite sequence $\{t_i\}\subset C(\Qbar)$ such that for each $i\in\N$ there exists some $m_i\in\Z$ such that $[m_i]P_{t_i}=Q_{t_i}$, then $\lim_{i\to\infty} h_C(t_i)=\infty$.
\end{prop}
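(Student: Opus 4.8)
The plan is to reduce the assertion to the functoriality of Weil heights: for a constant fibration there are no singular fibres, and the variation of the canonical height along $\sigma_Q$ is nothing more than the height machine applied to the morphism $C\to E^{0}$ cut out by $Q$, so (unlike in Proposition~\ref{prop:small points}) no input from \cite{Silverman-3} is needed. Write $\mathcal{E}=E^{0}\times_{\Spec\Qbar}C$ with $E^{0}/\Qbar$ an elliptic curve; then $\sigma_P$ is the graph of a point $P\in E^{0}(\Qbar)$, so $P_t=P$ for every $t\in C(\Qbar)$, and $\sigma_Q$ is the graph of a morphism $f\colon C\to E^{0}$, so $Q_t=f(t)$ for every $t$. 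Since $P$ is non-torsion, $\hhat_{E^{0}}(P)>0$ and $m\mapsto [m]P$ is an injection $\Z\to E^{0}(\Qbar)$; here $\hhat_{E^{0}}$ is the N\'eron--Tate height on $E^{0}$, which differs by $O(1)$ from a Weil height $h_{E^{0},D_{0}}$ attached to the ample divisor $D_{0}=(O)$ and satisfies $\hhat_{E^{0}}([m]x)=m^{2}\hhat_{E^{0}}(x)$. In these terms the hypothesis reads $f(t_i)=[m_i]P$ for all $i$, and the goal is to compare $\hhat_{E^{0}}(f(t_i))=m_i^{2}\hhat_{E^{0}}(P)$ with $h_C(t_i)$.

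First I would show that each integer is attained by only finitely many of the $m_i$; since the $t_i$ are pairwise distinct, this forces $|m_i|\to\infty$. Indeed, if $m_i=m$ for all $i$ in an infinite set $S$, then $Q_{t_i}=[m_i]P_{t_i}=[m]P$ for all $i\in S$, so the morphism $C\to E^{0}$ attached to the section $Q-[m]P$ vanishes on the infinite set $\{t_i:i\in S\}$ and is therefore identically $O$, since $C$ is an irreducible curve; but then $[m]P=Q$ on $E$, contradicting the hypothesis. Consequently $f(t_i)=[m_i]P$ takes infinitely many distinct values, so $f$ is nonconstant, hence a finite surjective morphism of smooth projective curves, and $f^{*}D_{0}$ is a divisor on $C$ of degree $\deg(f)>0$, hence ample.

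Next I would run the height machine on $C$ exactly as in the proof of Proposition~\ref{prop:small points}. Let $D$ be a degree-$1$ divisor with $h_C=h_{C,D}$, and fix an integer $c>\deg(f^{*}D_{0})$; then $cD-f^{*}D_{0}$ has positive degree, hence is ample, so $h_{C,cD-f^{*}D_{0}}\ge O(1)$ on $C(\Qbar)$ by \cite[Proposition~B.3.2]{Hindry-Silverman}, and additivity of heights in the divisor gives $c\,h_C(t)+O(1)\ge h_{C,f^{*}D_{0}}(t)$ for all $t\in C(\Qbar)$. Evaluating at $t=t_i$ and using functoriality $h_{C,f^{*}D_{0}}=h_{E^{0},D_{0}}\circ f+O(1)$, the comparison $h_{E^{0},D_{0}}=\hhat_{E^{0}}+O(1)$, and $f(t_i)=[m_i]P$, one obtains
\[
c\,h_C(t_i)+O(1)\ \ge\ \hhat_{E^{0}}\bigl([m_i]P\bigr)+O(1)\ =\ m_i^{2}\,\hhat_{E^{0}}(P)+O(1).
\]
Since $\hhat_{E^{0}}(P)>0$ and $m_i^{2}\to\infty$, this forces $h_C(t_i)\to\infty$, as wanted.

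The only step needing care is the first one — ruling out that the $m_i$ repeat a value infinitely often — because that is precisely where the hypotheses "$P$ non-torsion" and "$Q$ not a multiple of $P$" are used; everything afterwards is the standard height machine, and it is here markedly lighter than in Proposition~\ref{prop:small points}, since for a constant fibration the relation between $\hhat_{\mathcal{E}_t}(Q_t)$ and $h_C(t)$ is a plain instance of functoriality of Weil heights rather than Silverman's finer estimate \eqref{equation variation precise}.
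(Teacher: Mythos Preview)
Your proof is correct and follows the same overall structure as the paper's: establish $|m_i|\to\infty$ from the hypothesis that $Q$ is not a multiple of $P$, deduce $\hhat_{E^0}(Q_{t_i})=m_i^2\,\hhat_{E^0}(P)\to\infty$, and then translate this into $h_C(t_i)\to\infty$ via the height machine.

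The one genuine difference is in the last step. The paper invokes Silverman's variation formula \eqref{equation variation precise} to write $\hhat_{\cE_{t_i}}(Q_{t_i})=h_{C,\eta(Q)}(t_i)+O(1)$, then appeals to \cite[Proposition~B.3.5]{Hindry-Silverman} to pass from $h_{C,\eta(Q)}$ to $h_C$. You observe (correctly) that for a constant fibration this is overkill: the section $Q$ is literally the graph of a morphism $f\colon C\to E^0$, so $\hhat_{\cE_t}(Q_t)=\hhat_{E^0}(f(t))$ on the nose, and the comparison with $h_C$ is raw functoriality of Weil heights with respect to $f^*D_0$. This buys you a self-contained argument that does not need \cite{Silverman-3}; of course, for a constant fibration Silverman's divisor $\eta(Q)$ \emph{is} (up to linear equivalence) $f^*(O)$, so the two arguments are the same computation viewed through different lenses. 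Your additional observation that $f$ must be nonconstant (so that $f^*D_0$ is ample) is a nice explicit step that the paper leaves implicit in its citation.

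One tiny phrasing quibble: the pairwise distinctness of the $t_i$ is what you need to \emph{prove} that no integer $m$ is attained infinitely often (so that the vanishing locus of $Q-[m]P$ is infinite), not to \emph{deduce} $|m_i|\to\infty$ from that fact; but the mathematics is sound.
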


\begin{proof}
We have that each fiber $\cE_{t_i}$ is isomorphic to the generic fiber $E^0$, and so, because $P$ is a constant section,
\begin{equation}
\label{same canonical height on fibers}
\hhat_{\cE_{t_i}}(P_{t_i})=\hhat_{E^0}(P^0),
\end{equation}
where $P^0$ is the intersection of $P$ with the generic fiber and $\hhat_{E^0}(\cdot )$ is the N\'eron-Tate canonical height of the elliptic curve $E^0$ defined over $\Qbar$ (i.e., it is not the canonical height on the generic fiber of $\mathcal{E}$ seen as an elliptic curve defined over the function field $\Qbar(C)$).

Furthermore, since $P^0$ is not a torsion point of $E^0$, then $\hhat_{E^0}(P^0)>0$. Thus, from the equality $[m_i]P_{t_i}=Q_{t_i}$, along with equation \eqref{same canonical height on fibers} coupled with the fact that $|m_i|\to\infty$ (because $[k]P\ne Q$ for all integers $k$), we get that
\begin{equation}
\label{large height t i 0}
\hhat_{\cE_{t_i}}(Q_{t_i})=m_i^2\hhat_{E^0}(P^0)\to\infty.
\end{equation}
Then, using \eqref{equation variation precise}, we have
\begin{equation}
\label{variation height R}
\hhat_{\cE_{t_i}}(Q_{t_i})=h_{C,\eta(Q)}(t_i) + O(1),
\end{equation}
where $h_{C,\eta(Q)}$ is a Weil height on $C$ corresponding to a divisor $\eta(Q)$. So, equations \eqref{large height t i 0} and \eqref{variation height R} yield $h_{C,\eta(Q)}(t_i)\to\infty$ and thus, $h_C(t_i)\to\infty$ (see \cite[Proposition~B.3.5]{Hindry-Silverman}, along with our similar argument from the proof of Proposition~\ref{prop:small points}).
\end{proof}

Now we can prove our main result.
\begin{proof}[Proof of Theorem~\ref{main}.]
First we note that if $P_i$ is a torsion section (for some $i\in\{1,2\}$), then conclusion~(ii) holds trivially since then we would get that there exist infinitely many $t\in C(\Qbar)$ such that $(Q_i)_t=[k](P_i)_t$ for the same integer $k$. So, from now on, we assume that both $P_1$ and $P_2$ are nontorsion sections on $\mathcal{E}_1$, respectively $\mathcal{E}_2$.  In particular, this means that if $\hhat_{E_i}(P_i)=0$, then $\mathcal{E}_i$ must be an isotrivial elliptic surface.

We assume there exists an infinite sequence $\{t_i\}\subset C(\Qbar)$ such that for each $i\in\N$ there exist $m_{i,1},m_{i,2}\in\Z$ with the property that $[m_{i,1}](P_1)_{t_i}=(Q_1)_{t_i}$ and also $[m_{i,2}](P_2)_{t_i}=(Q_2)_{t_i}$. In addition, we assume conclusion~(ii) does not hold, i.e., there is no $m\in\Z$ such that $[m]P_i=Q_i$ for some $i\in\{1,2\}$.
We split our analysis into two cases.

{\bf Case 1.} $\hhat_{E_i}(P_i)>0$ for each $i=1,2$.

Applying then Proposition~\ref{prop:small points} to the sections $P_i$ and $Q_i$, we get that
\begin{equation}
\label{twice small points}
\lim_{i\to\infty} \hhat_{(\cE_1)_{t_i}}\left((P_1)_{t_i}\right)=\lim_{i\to\infty} \hhat_{(\cE_2)_{t_i}}\left((P_2)_{t_i}\right) =0.
\end{equation}
Equation \eqref{twice small points} along with Theorem~\ref{DM theorem} yields that conclusion~(i) must hold in Theorem~\ref{main}. Note that we obtain in this case that the morphisms $\varphi:E_1\lra E_2$ and $\psi:E_2\lra E_2$ from the conclusion of Theorem~\ref{DM theorem} are \emph{both} isogenies since $P_1$ and $P_2$ are nontorsion sections.

{\bf Case 2.} Either $\hhat_{E_1}(P_1)=0$ or $\hhat_{E_2}(P_2)=0$.

Without loss of generality, we assume $\hhat_{E_1}(P_1)=0$. Therefore (since $P_1$ is not torsion) $\mathcal{E}_1$ is an isotrivial elliptic surface, and furthermore, at the expense of replacing $C$ by a finite cover (and also base extending $\mathcal{E}_1$ and respectively $\mathcal{E}_2$), we may assume that $\mathcal{E}_1$ is a constant family. Thus, $\mathcal{E}_1=E_1^0\times_C C$ for some elliptic curve $E_1^0$ defined over $\Qbar$. Then also $P$ is a constant (nontorsion) section, because $\hhat_{\mathcal{E}_1}(P_1)=0$. Finally, we let $h_C(\cdot )$ be a Weil height for the algebraic points of $C$ with respect to a divisor of degree $1$.

If $\hhat_{E_2}(P_2)>0$, then Proposition~\ref{prop:small points} applied to $P_2$ and $Q_2$ yields that $h_C(t_i)$ is uniformly bounded, which contradicts the conclusion of Proposition~\ref{prop:small points 2} applied to $P_1$ and $Q_1$. Therefore, we must have that $\hhat_{E_2}(P_2)=0$ and therefore, also $\mathcal{E}_2$ is an isotrivial elliptic surface. At the expense of (yet another) base extension, we may assume that also $\mathcal{E}_2=E_2^0\times C$ is a constant fibration. Then $P_2$ is a constant, nontorsion section on $\mathcal{E}_2$. We let $P_i^0$ be the intersection of $P_i$ (for $i=1,2$) with the generic fiber of $\mathcal{E}_i$.

Now, if either $Q_1$ or $Q_2$ is also a constant section, then we get a contradiction since we assumed conclusion~(ii) does not hold. Indeed, if for some $i=1,2$ we have that both $P_i$ and $Q_i$ are constant sections on the constant elliptic surface $\mathcal{E}_i$, then the existence of a point $t\in C(\Qbar)$ such that for some $k\in \mathbb{Z}$ we have $[k](P_i)_t=(Q_i)_t$ yields that actually $[k]P_i=Q_i$ on $\cE_i$. So, we may assume that $Q_1$ and $Q_2$ are both nonconstant sections on $\mathcal{E}_1$, respectively $\mathcal{E}_2$. Then there is a (neither vertical, nor horizontal) curve $X\subset E_1^0\times E_2^0$ containing all points $\left((Q_1)_t, (Q_2)_t\right)$ for $t\in C(\Qbar)$. Furthermore, our hypothesis yields that this curve $X$ intersects the subgroup $\Gamma\subset E_1^0\times E_2^0$ spanned by the points $(P^0_1,0)$ and $(0,P_2^0)$ in an infinite set. Then the classical Mordell-Lang conjecture (proven by Faltings \cite{Faltings}) yields that $X$ itself is a coset of an algebraic subgroup of $E_1^0\times E_2^0$. Hence, because $X$ projects dominantly onto each coordinate, there exists a nontrivial isogeny $\tau:E_1^0\lra E_2^0$, and also there exist endomorphisms $\phi_i$ of $E_i^0$, not both trivial, such that $\tau(\phi_1(Q_1))=\phi_2(Q_2)$. Then, using (for any $i$ such that $m_{i,1}$ and $m_{i,2}$ are nonzero) that
$$[m_{i,1}]P_1^0=(Q_1)_{t_i}\text{ and }[m_{i,2}]P_2^0=(Q_2)_{t_i}$$
along with the fact that $\tau\left(\phi_1\left((Q_1)_{t_i}\right)\right)=\phi_2\left((Q_2)_{t_i}\right)$, we obtain the conclusion in Theorem~\ref{main} with $\varphi:=\tau\circ [m_{i,1}]\circ \phi_1$ and $\psi:=[m_{i,2}]\circ \phi_2$. Finally, note that since $P_1$ and $P_2$ are non-torsion, then also $\varphi$ and $\psi$ are dominant morphisms.
\end{proof}

\section{Common divisors of elliptic sequences}
\label{sec:elliptic gcd}

In this section, we apply Theorem~\ref{main} to prove Silverman's conjecture~\cite[Conjecture~7]{Silverman-AR} concerning common divisors of elliptic sequences; actually, our Proposition~\ref{prop:elliptic gcd} provides a slightly more general answer than \cite[Conjecture~7]{Silverman-AR}. We first recall the terminology and notation  from~\cite{Silverman-AR} that we will use in this section.

Let $k$ be an algebraically closed field of characteristic $0$.
Let $C$ be a  smooth projective curve defined over $k$ and let $K = k(C)$ be the function field of $C$. For any point  $\gamma\in C(k)$, we let $\ord_\gamma(D)$ denote the coefficient of $\gamma$ in $D \in \Div(C).$ The {\em greatest common divisor} for any two effective divisors $D_1, D_2 \in \Div(C)$ is defined as
$$
\GCD(D_1, D_2) = \sum_{\gamma\in C} \min\{\ord_\gamma(D_1),
\ord_\gamma(D_2)\}\cdot (\gamma) \in \Div(C) .
$$

For an elliptic curve $E$ defined over $K$, let $\pi : \cE\lra C$ be an elliptic surface whose generic fiber is $E$ and let $P\in E(K)$. Recall that the section corresponding to $P$ is denoted by $\sigma_P : C \to \cE$. We denote the image of $\sigma_P$ by $\bar{P}:= \sigma_P(C) \subset \cE.$

Let $E_1$ and $E_2$ be elliptic curves defined over $K$, let $\cE_i/C$
be elliptic surfaces with generic fibers  $E_i$, and let
$P_i\in E_i(K)$ for $i=1, 2.$ The greatest common divisor of $P_1$ and $P_2$ is given by
$$
\GCD(P_1, P_2) = \GCD\left(\sigma_{P_1}^{\ast}(\Obar_{\cE_1}),
  \sigma_{P_2}^{\ast}(\Obar_{\cE_2})\right).
$$
where $\Obar_{\cE_i} := \sigma_{O_i}(C)$ is the zero section on $\cE_i$ corresponding to the identity $O_i$ of $E_i, i =1, 2.$
Thus, for any given  $Q_i\in E_i(K)$, $\GCD(P_1 - Q_1, P_2 - Q_2)$ is the greatest common divisor of the two points $P_i-Q_i \in E_i$ for $i=1, 2$.
In the following, points $P_1$ and $P_2$ are called \emph{($K$-)dependent} if there are morphisms $\varphi : E_1\lra E_2$ and $\psi : E_2 \lra E_2$ not both trivial  such  that $\varphi(P_1) = \psi(P_2)$; otherwise they are called \emph{independent}. Note that if one of $P_1$ and $P_2$ is a torsion point, then they are automatically dependent.

Motivated by  Ailon-Rudnick's result~\cite{Ailon-Rudnick}, Silverman conjectured that an elliptic analogue
also exists. For the convenience of the reader, we recall his conjecture as follows.

\begin{conjecture}[Silverman~\protect{\cite[Conjecture~7]{Silverman-3}}]
  \label{conj:char 0 common divisor}
Let $K = k(C)$ be the function field of a smooth projective curve $C$ over an algebraically closed field $k$ of characteristic $0$,  let   $E_1/K$ and $E_2/K$ be elliptic curves, and let $P_1 \in E_1(K)$ and $P_2 \in
E_2(K)$ be $K$-independent points.

\begin{enumerate}
\item There is a constant $c = c(K, E_1, E_2, P_1, P_2)$ so that
  $$
  \deg \GCD([n_1] P_1, [n_2] P_2) \le c \quad \text{for all $n_1, n_2 \ge
1$.}
$$

\item Further, there is an equality
  $$
  \GCD([n] P_1, [n] P_2) = \GCD (P_1, P_2) \quad \text{for infinitely
    many $n \ge 1.$}
  $$
\end{enumerate}
\end{conjecture}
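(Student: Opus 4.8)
The plan is to deduce both parts of Conjecture~\ref{conj:char 0 common divisor} from Theorem~\ref{main} (applied with $Q_1$ and $Q_2$ the zero sections) together with an elementary formal‑group computation. We may assume $k=\Qbar$ --- the general case follows by a routine specialization argument, or from the version of the DeMarco--Mavraki theorem over an arbitrary algebraically closed field of characteristic zero --- and we carry out all local computations on the Kodaira--N\'eron minimal model of each $\cE_i$, so that the $\sigma_{P_i}$ are curves, the zero sections $\Obar_{\cE_i}$ are smooth (hence locally principal) divisors, and $[n]\colon\cE_i\to\cE_i$ is \'etale near $\Obar_{\cE_i}$ since $\car k=0$. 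For $\gamma\in C$ the coefficient $\ord_\gamma\bigl(\sigma_{[n]P_i}^{\ast}(\Obar_{\cE_i})\bigr)$ is positive exactly when $\bigl([n]P_i\bigr)_\gamma=O_i$, i.e.\ when $(P_i)_\gamma$ is torsion of order dividing $n$; hence, as $n_1,n_2$ range over the positive integers, every point of $\Supp\GCD([n_1]P_1,[n_2]P_2)$ lies in
\[
S':=\{\gamma\in C(\Qbar): (P_1)_\gamma\text{ and }(P_2)_\gamma\text{ are both torsion}\}\ \cup\ \{\text{bad fibres of }\cE_1\text{ or }\cE_2\}.
\]
The set of bad fibres is finite, and finiteness of the first set --- the one genuinely nontrivial point --- follows from Theorem~\ref{main}: an infinite first set would, on taking $m_{i,\gamma}$ to be the order of $(P_i)_\gamma$, satisfy the hypothesis of Theorem~\ref{main} with $Q_i$ the zero section, while $K$‑independence of $P_1,P_2$ excludes the isogeny relation $\varphi(P_1)=\psi(P_2)$ and prevents either $P_i$ from being torsion, so neither conclusion of the theorem can hold --- a contradiction. (Equivalently, this is Theorem~\ref{DM theorem}.)

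Next I would bound the multiplicities. Fix $\gamma\in S'$ and let $d_i$ be the order of $(P_i)_\gamma$ (if some $(P_i)_\gamma$ has infinite order, $\gamma$ contributes to no $\GCD([n_1]P_1,[n_2]P_2)$ and may be discarded). If $d_i\nmid n$ then $\ord_\gamma\bigl(\sigma_{[n]P_i}^{\ast}(\Obar_{\cE_i})\bigr)=0$. If $d_i\mid n$, write $n=d_im$: since $[d_i]P_i$ reduces to $O_i$ at $\gamma$, the corresponding section lies formally near $\gamma$ in the one‑dimensional formal group $\widehat{\cE}_i$ of the identity component of $\cE_i$ along $\Obar_{\cE_i}$ (a formal group over $\OO_{C,\gamma}$), and its formal parameter $\xi$ has $\ord_\gamma(\xi)=\ord_\gamma\bigl(\sigma_{[d_i]P_i}^{\ast}(\Obar_{\cE_i})\bigr)=:e_i(\gamma)\ge1$. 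Because $[n]P_i=[m]\bigl([d_i]P_i\bigr)$ and the multiplication‑by‑$m$ series has the shape $[m]_{\widehat{\cE}_i}(T)=mT+O(T^2)=T\cdot\bigl(m+O(T)\bigr)$ with $m$ a unit of $\OO_{C,\gamma}$ (this is where characteristic zero enters), applying it preserves the order of vanishing, so $\ord_\gamma\bigl(\sigma_{[n]P_i}^{\ast}(\Obar_{\cE_i})\bigr)=e_i(\gamma)$ whenever $d_i\mid n$, independently of $n$. (At fibres of bad reduction the same argument applies verbatim, working with the N\'eron model.) Consequently
\[
\deg\GCD([n_1]P_1,[n_2]P_2)\ \le\ \sum_{\gamma\in S'}\min\bigl(e_1(\gamma),e_2(\gamma)\bigr)\ =:\ c,
\]
a constant depending only on $K,E_1,E_2,P_1,P_2$; this proves part~(i).

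For part~(ii), set $T:=\{\gamma\in C: (P_1)_\gamma=O_1\text{ and }(P_2)_\gamma=O_2\}$, so $\GCD(P_1,P_2)$ is supported on $T$ with $\gamma$‑coefficient $\min\bigl(\ord_\gamma\sigma_{P_1}^{\ast}(\Obar_{\cE_1}),\ord_\gamma\sigma_{P_2}^{\ast}(\Obar_{\cE_2})\bigr)$. For each $\gamma\in S'\setminus T$ either some $(P_i)_\gamma$ has infinite order --- and then $\gamma$ lies in no $\Supp\GCD([n]P_1,[n]P_2)$ --- or both $d_i(\gamma)$ are finite with at least one $>1$, in which case fix a prime $p_\gamma$ dividing it. For any $n\ge1$ coprime to $\prod_{\gamma\in S'\setminus T}p_\gamma$ (there are infinitely many such $n$), no point of $S'\setminus T$ lies in $\Supp\GCD([n]P_1,[n]P_2)$, so that support is contained in $T$; and on $T$ the computation of the previous paragraph, applied with $d_i=1$, gives $\ord_\gamma\bigl(\sigma_{[n]P_i}^{\ast}(\Obar_{\cE_i})\bigr)=\ord_\gamma\bigl(\sigma_{P_i}^{\ast}(\Obar_{\cE_i})\bigr)$. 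Hence $\GCD([n]P_1,[n]P_2)=\GCD(P_1,P_2)$ for all such $n$, which is part~(ii).

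Once Theorem~\ref{main} is granted the argument is not deep: it supplies the single nontrivial ingredient, the finiteness of $S'$, and the rest is the multiplicity bookkeeping above. The step needing the most care is the bad‑reduction fibres, where one must pass to the N\'eron model and check that $[d_i]P_i$ actually lands in the formal group of the identity component; but in characteristic zero $[n]$ is \'etale along the zero section, so the order‑preservation $[m]_{\widehat{\cE}_i}(T)=mT+O(T^2)$ applies there unchanged and no new phenomenon arises. The only other point, also routine, is the reduction of a general characteristic‑zero $k$ to $k=\Qbar$, needed because Theorem~\ref{main} is stated over $\Qbar$.
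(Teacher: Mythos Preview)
Your argument is correct and follows the same overall strategy as the paper: use Theorem~\ref{main} (with $Q_i=O_i$) to obtain finiteness of the set of parameters where both $(P_i)_t$ are torsion, then bound multiplicities locally, and finally deduce part~(ii) by a coprimality condition on~$n$. Two points of comparison are worth noting. First, for the multiplicity bound you give a self-contained formal-group computation showing that $\ord_\gamma\bigl(\sigma_{[n]P_i}^{\ast}(\Obar_{\cE_i})\bigr)$ is actually \emph{constant} once $d_i\mid n$; the paper instead packages this as Lemma~\ref{lem:multiplicity}, reducing to \cite[Lemma~4]{Silverman-AR}. Your version is a bit sharper and more transparent. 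Second, for general algebraically closed $k$ of characteristic~$0$ you assert that a ``routine specialization argument'' reduces to $k=\Qbar$; this can be made to work, but it is not entirely free (one must arrange that $K$-independence of $P_1,P_2$ survives the specialization). The paper avoids this by invoking the Masser--Zannier theorem \cite{M-Z} directly over arbitrary~$k$ to obtain finiteness of the common-torsion locus, so that Theorem~\ref{main} is only needed for the stronger statement (Proposition~\ref{prop:elliptic gcd}(i)) involving nonzero $Q_i$ over~$\Qbar$.
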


\begin{remark}
Silverman~\cite[Theorem~8]{Silverman-3} showed that Conjecture~\ref{conj:char 0 common divisor} is true provided that both $E_1$ and $E_2$ have constant $j$-invariant.
\end{remark}

As an application of Theorem~\ref{main}, we prove that  Conjecture~\ref{conj:char 0 common divisor} holds (even in a slightly stronger form); we strengthen further the conclusion from Conjecture~\ref{conj:char 0 common divisor} when $k=\Qbar$.


\begin{prop}
\label{prop:elliptic gcd}
Let $k$ be an algebraically closed field of characteristic $0$.
Let $C$ be a smooth projective curve defined over $k$, let $K = k(C)$ and let $E_i/K, i=1, 2,$ be elliptic curves defined over $K.$ Let $P_i, Q_i\in E_i(K)$ for $i=1, 2$ and furthermore, assume that $P_1$ and $P_2$ are $K$-independent.
\begin{enumerate}
\item If $k=\Qbar$, then there exists an effective divisor $D\in \Div(C)$ such that
  \[
\GCD\left([n_1] P_1 - Q_1, [n_2] P_2 - Q_2\right) \le D
\]
for all integers $n_i$ such that $[n_i] P_i \ne Q_i, i=1, 2.$

\item For an arbitrary algebraically closed field $k$ of characteristic $0$,  there exists an effective divisor $D_0\in \Div(C)$ such that
  \[
\GCD\left([n_1] P_1 , [n_2] P_2 \right) \le D_0
\]
for all nonzero integers $n_i$.

\item The set
  \[
\left\{n\ge 1 : \GCD\left([n] P_1 , [n] P_2\right) = \GCD\left(P_1, P_2\right)\right\}
\]
has positive density in $\mathbb{N}$.

\item For all but finitely many primes $q$, we have that $\GCD\left([q] P_1 , [q] P_2\right) = \GCD\left(P_1, P_2\right)$.
\end{enumerate}

\end{prop}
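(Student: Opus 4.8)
The plan is to reduce the whole proposition to two geometric facts: \textbf{(a)} the union, over all admissible pairs of multipliers, of the supports of the divisors $\GCD([n_1]P_1-Q_1,[n_2]P_2-Q_2)$ is a \emph{finite} subset $S\subset C$; and \textbf{(b)} for each $\gamma\in S$ the local multiplicity $\ord_\gamma\bigl(\sigma_{[n_1]P_1-Q_1}^{*}\Obar_{\cE_1}\bigr)$ is bounded independently of $n_1$, and symmetrically in $n_2$. Granting these, parts (1) and (2) are immediate: take $D$, resp.\ $D_0$, to be $\sum_{\gamma\in S}\bigl(\sup_{(n_1,n_2)}\min\{\ord_\gamma(\sigma_{[n_1]P_1-Q_1}^{*}\Obar_{\cE_1}),\ord_\gamma(\sigma_{[n_2]P_2-Q_2}^{*}\Obar_{\cE_2})\}\bigr)\cdot(\gamma)$, which is a well-defined effective divisor by (b). Parts (3) and (4) need in addition a quantitative version of (b) saying exactly which $n$ make a given $\gamma\in S$ contribute.

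To prove (a): if $\gamma$ lies in the support of $\GCD([n_1]P_1-Q_1,[n_2]P_2-Q_2)$ then $[n_1](P_1)_\gamma=(Q_1)_\gamma$ and $[n_2](P_2)_\gamma=(Q_2)_\gamma$ on the fibers. Were there infinitely many such $\gamma$, I would feed the data into Theorem~\ref{main}: conclusion~(i) would make $P_1,P_2$ be $K$-dependent, against hypothesis, so conclusion~(ii) holds, say $[k_1]P_1=Q_1$. Then $[n_1]P_1-Q_1=[n_1-k_1]P_1$ vanishes at $\gamma$ precisely when $(P_1)_\gamma$ is torsion, and I would re-run Theorem~\ref{main} on the sections $P_1,O_1$ over $\cE_1$ and $P_2,Q_2$ over $\cE_2$ (taking $m_\gamma$ to be the order of $(P_1)_\gamma$, so $[m_\gamma](P_1)_\gamma=O_1$): conclusion~(i) again contradicts independence, while conclusion~(ii) forces either $P_1$ torsion --- impossible, since $K$-independent points are non-torsion --- or $[k_2]P_2=Q_2$; and in the latter case $(P_2)_\gamma$ is torsion too, so the canonical heights of $(P_1)_\gamma$ and $(P_2)_\gamma$ vanish at infinitely many $\gamma$ and Theorem~\ref{DM theorem} forces dependence once more. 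Every branch is contradictory, so $S$ is finite. For parts (2)--(4), where $Q_i=O_i$ and $k$ is an arbitrary algebraically closed field of characteristic $0$, the same conclusion follows after a standard specialization argument reducing to $k=\Qbar$, since $C$, the $\cE_i$ and the $P_i$ are defined over a finitely generated extension of $\Q$.

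To prove (b) together with its refinement: fix $\gamma\in S$. If $(P_1)_\gamma$ is non-torsion there is at most one $n_1$ with $[n_1](P_1)_\gamma=(Q_1)_\gamma$, so nothing is needed. If $(P_1)_\gamma$ has finite order $d$ and $(Q_1)_\gamma=[c](P_1)_\gamma$, the admissible $n_1$ form one class $c+d\Z$; writing $n_1=c+ds$ we have $[n_1]P_1-Q_1=([c]P_1-Q_1)\oplus[s]([d]P_1)$, where both $[c]P_1-Q_1$ and $[d]P_1$ reduce to $O_1$ at $\gamma$ and hence lie in the formal group $\widehat{\cE}_1(\fm_\gamma)$ (with formal parameter $z$). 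Because the residue characteristic is $0$, multiplication by $s$ is an automorphism of $\widehat{\cE}_1$, so $z([s][d]P_1)$ and $z([d]P_1)$ have the same $\gamma$-valuation for every $s\neq0$; comparing with the formal group law, $\ord_\gamma(\sigma_{[n_1]P_1-Q_1}^{*}\Obar_{\cE_1})=\min\{v_\gamma(z([c]P_1-Q_1)),v_\gamma(z([d]P_1))\}$ for all $s$ outside at most one exceptional value --- which, if it exists, is pinned to a specific rational integer since the residue field contains $\Q$, and for that single $n_1$ the multiplicity is simply some fixed finite number. The finitely many fibers of bad reduction are handled identically on a minimal regular model, the component groups being finite. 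This gives (b). For (3) and (4) (where $Q_i=O_i$): a point $\gamma$ contributes to $\GCD([n]P_1,[n]P_2)$ iff $(P_1)_\gamma$ and $(P_2)_\gamma$ are torsion of orders dividing $n$; the points of $S$ with both fibers equal to $O_i$ contribute exactly the same multiplicities as to $\GCD(P_1,P_2)$ (again because $[n]$ preserves valuations in the formal group in residue characteristic $0$), whereas each of the remaining finitely many $\gamma\in S$ has $\ell_\gamma:=\operatorname{lcm}$ of the two orders $\geq2$ and contributes iff $\ell_\gamma\mid n$. Hence $\GCD([n]P_1,[n]P_2)=\GCD(P_1,P_2)$ whenever $\ell_\gamma\nmid n$ for all such $\gamma$; this holds for every $n\equiv1\pmod{\operatorname{lcm}_\gamma\ell_\gamma}$, a set of positive density, proving (3), and it can fail for a prime $q$ only when $q=\ell_\gamma$ for one of the finitely many exceptional $\gamma$, proving (4).

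The hard part will be step (b). The finiteness in (a) and the arithmetic bookkeeping in (3)--(4) are comparatively straightforward once (a) is in place; but (b) is where one must descend from the global ``unlikely intersections'' input (Theorems~\ref{main} and~\ref{DM theorem}) to a hands-on local analysis --- formal groups, fibers of bad reduction and their component groups, and the single genuinely exceptional multiplier at each point --- in order to get a multiplicity bound that is truly uniform in $n_1$ and $n_2$.
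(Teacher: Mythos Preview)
Your overall architecture --- finiteness of the support $S$ via Theorem~\ref{main}, then a local multiplicity bound at each $\gamma\in S$, then bookkeeping for (iii) and (iv) --- is exactly the paper's strategy. A few differences are worth flagging.

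\medskip

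\textbf{Step (a).} You are in fact more careful than the paper here. The paper simply writes ``Since $P_1$ and $P_2$ are independent, by Theorem~\ref{main} we may take $m_\gamma=0$ for all but finitely many points''. But Theorem~\ref{main} has two alternatives, and independence only kills alternative~(i); your recursive re-application of Theorem~\ref{main} (replacing $Q_1$ by $O_1$, then $Q_2$ by $O_2$, and finishing with Theorem~\ref{DM theorem}) is the honest way to dispose of alternative~(ii). This is a genuine clarification, not a deviation.

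\medskip

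\textbf{Step (b).} Here you take a different route. The paper does not redo any formal-group computation; it proves a short lemma (Lemma~\ref{lem:multiplicity}) reducing the bound for $\ord_\gamma(\sigma_{[n]P}^*(\bar Q))$ to the case $Q=O$, which is \cite[Lemma~4]{Silverman-AR}: if $(Q)_\gamma$ has order $\ell$, then $\ord_\gamma(\sigma_{[n]P}^*(\bar Q))\le\ord_\gamma(\sigma_{[\ell n]P}^*(\bar O))$. Your direct formal-group argument is a correct alternative and essentially reproves Silverman's lemma; the paper's reduction is just quicker. (Incidentally, your comment that (b) is ``the hard part'' inverts the actual difficulty: (a) is where the deep input from \cite{DM} and \cite{M-Z} enters, while (b) is the comparatively soft local step.) Likewise, for (iii) the paper only tracks the \emph{support} of $\GCD([n]P_1,[n]P_2)$, defines $n_\gamma$ as the least $n>0$ bringing $\gamma$ into the support, and observes that $\gamma$ contributes iff $n_\gamma\mid n$; the multiplicity stability at points already in $\GCD(P_1,P_2)$ is left implicit in the appeal to \cite[Theorem~8(b)]{Silverman-AR}, whereas you spell it out via the formal group.

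\medskip

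\textbf{Part (ii) over general $k$.} This is the one place you should tighten. ``A standard specialization argument reducing to $k=\Qbar$'' is not quite right as stated: specializing the finitely generated field of definition to $\Qbar$ can create new torsion and could a priori destroy the $K$-independence of $P_1,P_2$, so you would need to invoke a Silverman/N\'eron specialization theorem to pick a good $\Qbar$-point, and you did not say this. The paper sidesteps the issue entirely by quoting Masser--Zannier \cite{M-Z}, whose result is valid over $\C$ and hence (via the Lefschetz principle) over any algebraically closed field of characteristic~$0$, and which directly gives finiteness of the set of $\gamma$ with both $(P_i)_\gamma$ torsion. If you wish to keep your reduction, replace ``specialize to $\Qbar$'' by ``embed the finitely generated field of definition into $\C$ and apply \cite{M-Z}''.
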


\begin{remark}
The conclusion of Proposition~\ref{prop:elliptic gcd}~(i) for an arbitrary algebraically closed field $k$ of characteristic $0$ would follow from our method once the validity of DeMarco-Mavraki's result \cite{DM} (see Theorem~\ref{DM theorem}) is extended over function fields. In turn, their result is contingent on establishing the smooth variation of the canonical height in fibers of an elliptic surface defined over a function field (over $\Qbar$).
\end{remark}

The proof of Proposition~\ref{prop:elliptic gcd} relies on Theorem~\ref{main} and the following lemma which is a variant of~\cite[Lemma~4]{Silverman-AR}  bounding $\ord_\gamma(\sigma_{[n]P}^\ast(\Obar_{\cE}))$ for $\gamma\in C$ and all integers $n\ne 0.$

\begin{lemma}
\label{lem:multiplicity}
Let $k$ be an algebraically closed field of characteristic $0$.
Let $E$ be an elliptic curve defined over $k$ and let $\cE\lra C$ be an  elliptic surface whose generic fiber is $E$. Let $\gamma \in C(k)$ and let $P, Q\in E(k(C))$ be given.
There exists a constant $m = m(\gamma, E, P, Q)$ such that $\ord_{\gamma}(\sigma_{[n]P}^{\ast}(\bar{Q}))\le m$ for all  integers $n$ such that $[n] P \ne Q.$
\end{lemma}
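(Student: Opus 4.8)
The plan is to reduce the statement to a purely local question on the curve $C$ near the point $\gamma$, and then to handle the two possible reasons that $\ord_\gamma$ of the pullback divisor could be large: either the section $\sigma_{[n]P}$ meets $\bar Q$ transversally but at a point of $C$ that is not $\gamma$ (irrelevant), or it meets $\bar Q$ with high multiplicity at a point lying over $\gamma$, which forces the two sections to be tangent to high order in the fiber $\cE_\gamma$. First I would choose a local uniformizer $u$ at $\gamma$ and a model of $\cE$ over the local ring $\cO_{C,\gamma}$ (completing if convenient), together with a formal/analytic parameter $z$ on the formal group of $\cE$ along the zero section in a neighborhood of $\gamma$; then $\sigma_{[n]P}^\ast(\bar Q)$ at $\gamma$ is measured by the $u$-adic valuation of $z([n]P) - z(Q)$ evaluated as elements of the relevant local ring, where $[n]P$ means the result of applying the formal group multiplication-by-$n$ map to $z(P)$.

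Second, I would invoke the structure of the formal group law: writing $w = z(P)$ as a Laurent series in $u$, the series $z([n]P)$ is $[n](w)$ under the formal group, and one uses that $[n]$ is, up to a unit, multiplication by $n$ on the tangent space, with higher-order corrections. The key point (this is the content of Silverman's Lemma~4 in the $Q=0$ case) is that the "leading behavior" of $z([n]P)$ in $u$ is governed by that of $z(P)$ in a way that is essentially independent of $n$ — the valuation $\ord_\gamma$ of $z([n]P)$ equals $\ord_\gamma(z(P))$ once $n$ is invertible in $k$, which holds here since $\car k = 0$. So the only way $\ord_\gamma(z([n]P) - z(Q))$ can fail to be bounded is if $z([n]P)$ and $z(Q)$ have the same leading term, and then the difference could have larger valuation. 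I would bound this by distinguishing cases according to whether $\ord_\gamma(z(P)) \ne \ord_\gamma(z(Q))$ (then $\ord_\gamma$ of the difference equals $\min$, bounded independently of $n$), versus $\ord_\gamma(z(P)) = \ord_\gamma(z(Q))$, where one compares leading coefficients: $z([n]P)$ has leading coefficient a unit times $n$ times that of $z(P)$, and this can equal the leading coefficient of $z(Q)$ for at most a bounded set of $n$ (at most one value, essentially), so for all other $n$ the valuation of the difference is again exactly $\min$, hence bounded; for the finitely many exceptional $n$, each contributes a finite value and we take the max, while if $[n]P = Q$ we have excluded that $n$.

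Third, I would assemble these local estimates into the uniform constant $m = m(\gamma, E, P, Q)$, taking care that the finitely many $n$ requiring individual attention each give a finite $\ord_\gamma(\sigma_{[n]P}^\ast(\bar Q))$ (finite precisely because $[n]P \ne Q$, so the relevant difference of series is not identically zero), and that all remaining $n$ are controlled by a single bound coming from $\min\{\ord_\gamma(z(P)), \ord_\gamma(z(Q))\}$ plus a fixed correction accounting for the bad reduction contribution (the difference between the naive intersection number and the one computed on the Néron model / minimal model, which is bounded by the local conductor-type data at $\gamma$ and is independent of $n$).

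The main obstacle I expect is making the formal-group manipulation precise at a point $\gamma$ of bad reduction, where $\cE_\gamma$ is not an elliptic curve: one must either pass to a Néron model and argue componentwise, or use the minimal regular model and track how the multiplication-by-$n$ map and the section $\sigma_{[n]P}$ interact with the special fiber, so that the statement "$\ord_\gamma(z([n]P)) = \ord_\gamma(z(P))$ for $n$ invertible" is correctly formulated. This is exactly the delicate part of Silverman's original Lemma~4, and the right move is to cite and adapt that analysis, replacing the zero section $\Obar_\cE$ there by the section $\bar Q$, which only shifts the comparison from $z([n]P)$ versus $0$ to $z([n]P)$ versus $z(Q)$ and requires the extra leading-coefficient comparison sketched above.
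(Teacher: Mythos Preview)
Your plan heads toward a correct result but takes a much more hands-on route than the paper, and it has a gap you do not name. The paper's argument is essentially two lines: if more than one integer $n$ satisfies $\sigma_{[n]P}(\gamma)=\sigma_Q(\gamma)$ (otherwise the bound is trivial), then $Q_\gamma$ is torsion of some order $\ell$; applying $[\ell]$ one obtains
\[
\ord_\gamma\bigl(\sigma_{[n]P}^{\ast}(\bar Q)\bigr)\ \le\ \ord_\gamma\bigl(\sigma_{[\ell n]P}^{\ast}(\bar O_{\cE})\bigr),
\]
and the right-hand side is bounded independently of $n$ by Silverman's Lemma~4 (the $Q=O$ case), invoked as a black box. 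No formal-group computation is redone; the whole content is this multiply-by-$\ell$ reduction.

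By contrast, you propose to rerun the formal-group analysis from scratch with the extra term $z(Q)$. The issue you miss is that the parameter $z$ along the \emph{zero} section only gives a coordinate for sections that specialize to $O$ at $\gamma$: for your expressions $z(P)$, $z(Q)$, $z([n]P)$ to lie in the maximal ideal (so that $[n]_F(z(P))=n\,z(P)+\cdots$ and the leading-coefficient comparison make sense), you need $P_\gamma=O$ and $Q_\gamma=O$. In the situation at hand $P_\gamma$ and $Q_\gamma$ are torsion but typically nonzero, so the expansion is not available as written. This is not the bad-reduction obstacle you flag (it already occurs at points of good reduction); it is a centering issue. It can be repaired --- e.g.\ pass from $P$ to $[r]P$ with $r$ the order of $P_\gamma$, and from $Q$ to $[n_0]P-Q$ for one fixed admissible $n_0$, so that both specialize to $O$, after which your leading-coefficient dichotomy goes through --- but that preliminary reduction is morally the same as the paper's multiply-by-$\ell$ trick, and once one sees that trick it is simpler to cite Silverman's Lemma~4 directly rather than reprove it.
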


\begin{proof}
    Observe that $\ord_\gamma(\sigma_{[n] P}^\ast(\bar{Q})) \ge 1$ if and only if $\sigma_{[n]P}(\gamma) =  \sigma_{Q}(\gamma).$ Moreover,  $\sigma_Q(\gamma)$ is a torsion point of $\cE_\gamma$ if and only if there are more than one $n$ such that $\ord_\gamma(\sigma_{[n] P}^\ast(\bar{Q})) \ge 1.$

    It suffices to prove the assertion when $\ord_\gamma(\sigma_{n P}^\ast(\bar{Q})) \ge 1$ for more than one integer $n.$
    Thus, we assume that $ \sigma_{Q}(\gamma)$ is a torsion point of $\cE_\gamma.$  Let $\ell$ be the order of $\sigma_{Q}(\gamma)$ and that $\ord_\gamma(\sigma_{n P}^\ast(\bar{Q})) \ge 1$ for some integer $n$ such that $[n]P \ne Q$.  It follows that $\ord_\gamma(\sigma_{n P}^\ast(\bar{Q}))$ is finite and that
\begin{equation}
\label{the zero element}
\sigma_{[\ell n]P}(\gamma) = [\ell] \sigma_{[n]P}(\gamma)= [\ell] \sigma_{Q}(\gamma) = O_{\cE_\gamma},
\end{equation}
which is the zero element for the elliptic curve $\cE_\gamma$.

   If $Q$ is the zero element of $E$,  then it follows from~\cite[Lemma~4]{Silverman-AR} that the value of  $\ord_\gamma(\sigma_{[n] P}^\ast(\bar{O}_\cE))$ is bounded independent of $n\ne 0$ and we are done in this case.

   Assume that $Q \ne O$. Then \eqref{the zero element} yields the inequality  $$\ord_\gamma(\sigma_{[n] P}^\ast(\bar{Q})) \le \ord_\gamma(\sigma_{[\ell n] P}^\ast(\bar{O}_\cE)).$$ Hence,  we know that  $\ord_\gamma(\sigma_{[n] P}^\ast(\bar{Q})) $ is bounded independent of $n\ne 0$ (and $n$ such that $[n] P \ne Q$). As $Q\ne O$,  we also have that  $\ord_\gamma(\sigma_{n P}^\ast(\bar{Q}))$ is finite if $n=0$.
Thus we obtain that  $\ord_\gamma(\sigma_{n P}^\ast(\bar{Q}))$ is bounded independent of $n$ such that $[n]P \ne Q$, which concludes our proof.
\end{proof}

\begin{proof}[Proof of Proposition~\ref{prop:elliptic gcd}]
We first prove part~(i) in Proposition~\ref{prop:elliptic gcd}. So, for each $\gamma\in C(\Qbar)$, let $m_{i, \gamma}$ be an upper bound for
  $\ord_{\gamma}(\sigma_{[n]P_i}^{\ast}(\bar{Q_i}))$ as in Lemma~\ref{lem:multiplicity}.
  Set $m_{\gamma} = \min\{m_{1,\gamma}, m_{2, \gamma}\}$.
Since $P_1$ and $P_2$ are independent, by Theorem~\ref{main} we may take $m_\gamma = 0$ for all but finitely many points $\gamma\in C(\Qbar).$ Put
\[
 D := \sum_{\gamma\in C(\Qbar)}\, m_\gamma (\gamma).
\]
Then, $D$ is an effective divisor of $C$. Now it follows directly from Lemma~\ref{lem:multiplicity} that $\GCD([n_1] P_1 - Q_1, [n_2] P_2 - Q_2) \le D $ for all $n_i$ such that $[n_i]P\ne Q_i$ for both $i=1, 2.$

For the proof of part~(ii) in Proposition~\ref{prop:elliptic gcd},  we let $Q_i=O_i$ be the zero element of $E_i$ for $i=1, 2.$ If $k=\Qbar$, then the result follows immediately from part~(i). Now, for the general case, we note that it suffices to prove the existence of at most finitely many $t\in C(k)$ such that both $(P_1)_t$ and $(P_2)_t$ are torsion points on the elliptic fiber $\cE_t$; indeed, the fact that the multiplicity of each such $t$ appearing in a divisor $\GCD([n_1]P_1, [n_2]P_2)$ is bounded follows exactly as in the proof of part~(i), using Lemma~\ref{lem:multiplicity}. On the other hand, if there exist infinitely many $t\in C(k)$ such that both $(P_1)_t$ and $(P_2)_t$ are torsion yields (according to \cite[Theorem,~p.~117]{M-Z}) that $P_1$ and $P_2$ are related, contradiction.

The conclusion in part~(iii) in Proposition~\ref{prop:elliptic gcd} follows almost verbatim from the proof of~\cite[Theorem~8~(b)]{Silverman-AR}. Essentially, the argument is as follows. For each of the finitely many $\gamma\in C(k)$ which does not appear in the support of $\GCD(P_1,P_2)$, but for which there exists some positive integer $n$ such that $\gamma$ is contained in the support of the divisor $\GCD([n]P_1, [n]P_2)$, or equivalently,
\begin{equation}
\label{n it holds}
\text{the divisor }\GCD([n]P_1, [n]P_2)-(\gamma)\text{ is effective,}
\end{equation}
we let $n_\gamma$ be the smallest such positive integer $n$ for which \eqref{n it holds} holds. Then it is easy to see that $\gamma$ is in the support of $\GCD([n]P_1,[n]P_2)$ if and only if $n_\gamma\mid n$. Also, for each of these points $\gamma$ which are not in the support of $\GCD(P_1,P_2)$, we have that $n_\gamma>1$. Then for any positive integer $n$ which is not divisible by any of the finitely  many integers $n_\gamma$, we have that
$$\GCD([n]P_1,[n]P_2)=\GCD(P_1,P_2).$$

The conclusion in part~(iv) in Proposition~\ref{prop:elliptic gcd} follows from the proof of part~(iii) since for all primes $q$ which do not divide any of the finitely many numbers $n_\gamma>1$, we have that $\GCD([q]P_1,[q]P_2)=\GCD(P_1,P_2)$.  
\end{proof}


\bibliographystyle{amsalpha}

\def\cprime{$'$} \def\cprime{$'$} \def\cprime{$'$} \def\cprime{$'$}
\providecommand{\bysame}{\leavevmode\hbox to3em{\hrulefill}\thinspace}
\providecommand{\MR}{\relax\ifhmode\unskip\space\fi MR }
\providecommand{\MRhref}[2]{%
  \href{http://www.ams.org/mathscinet-getitem?mr=#1}{#2}
}
\providecommand{\href}[2]{#2}

\end{document}